



\documentclass{amsart}
\usepackage{graphicx} 
\usepackage{times,colordvi,amsmath,epsfig,float,multicol,subfigure} 




\subjclass{Primary: 37C50, 37D45; Secondary:  37C10}
\keywords{Average Shadowing, Asymptotic Average Shadowing, Limit
Shadowing, Geometrical Lorenz flows}


\title[On Various Types of Shadowing for Geometric Lorenz Flows]
      {On Various Types of Shadowing for Geometric Lorenz Flows}


\author[A. Arbieto]{A. Arbieto}
\address{Instituto de Matem\'atica, Universidade Federal do Rio de Janeiro, P. O. Box 68530, 21945-970 Rio de Janeiro, Brazil.}
\email{arbieto@im.ufrj.br}

\author[J.E. Reis]{J.E. Reis}
\address{Instituto de Matem\'atica, Universidade Federal do Rio de Janeiro, P. O. Box 68530, 21945-970 Rio de Janeiro, Brazil.}
\email{$\textmd{joao\_eduardo\_reis@im.ufrj.br}$}

\author[R. Ribeiro]{R. Ribeiro}
\address{Instituto de Matem\'atica, Universidade Federal do Rio de Janeiro, P. O. Box 68530, 21945-970 Rio de Janeiro, Brazil.}
\email{raquelribeiro@im.ufrj.br}

\thanks{The first author was partially supported by CNPq, FAPERJ and PRONEX/DS from
Brazil.}
\thanks{The second author was supported by CAPES}
\thanks{The third author was supported by CNPq.}

\usepackage[usenames]{color}
\newtheorem{theorem}{Theorem}

\newtheorem{lemma}[theorem]{Lemma}
\newtheorem{proposition}[theorem]{Proposition}

\theoremstyle{definition}
\newtheorem{definition}[theorem]{Definition}
\newtheorem{remark}[theorem]{Remark}

\newcommand{\F}{\mathcal{F}}

\begin{document}

\begin{abstract}
We show that Lorenz flows have neither limit shadowing property
nor average shadowing property nor the asymptotic average
shadowing property where the reparametrizations related to these
concepts relies on the set of increasing homeomorphisms with
bounded variation.
\end{abstract}

\maketitle

\section{Introduction}

The shadowing property is a dynamical property that plays a key
role in the study of the stability of the dynamics. This property
is found in hyperbolic dynamics and it was used with success to
prove their stability, see for instance \cite{shub}. Roughly
speaking, it allows us to trace a set of point which looks like an
orbit, but with errors, by a true orbit. For practical
applications, we can suppose that $\varphi$ is viewed as the orbit
realized in numerical calculation by computer, or in physical
experiments, thus it could have errors. Then shadowing property
allow us to ``correct'' this errors, finding a true evolution
which nicely approximates $\varphi$. Thus, to decide which systems
possess the shadowing property is an important problem in
dynamics.

The geometric Lorenz model is an important example in the theory
of dynamical systems, it was inspired by the equations found by
Lorenz related to a model of fluid convection \cite{rr9}.
Moreover, it is one of the most famous examples, since it is often
related with the notion of chaos. It was studied in the initial
stages by Guckenheimer-Williams \cite{rr4}, \cite{rr6},
\cite{rr5}, Afraimovich-Bykov-Shil'nikov \cite{rr7}  and
Yorke-Yorke \cite{rr8}. Moreover it is an attractor which is
transitive and contains both regular orbits and singularities. As
we mentioned before, the hyperbolic dynamical systems possess the
shadowing property. However, these Lorenz systems are not
hyperbolic, since they have singularities approximated by regular
orbits. Even so, they have some robust properties which are also
shared by hyperbolic dynamics.

It is natural then to ask if these Lorenz systems have the
shadowing property. Komuro \cite{rr10} showed  that geometric
Lorenz flows do not satisfy the (parameter-fixed) shadowing
property excepted in very restricted cases. Even so, in
\cite{rr11} it was shown that the geometric Lorenz attractors have
the parameter-shifted shadowing property. However, this notion is
very technical. So, we could ask if these systems have some
shadowing-type properties which are more easy to check.

Related to this, many properties were suggested and studied by
several authors. As a kind of generalization of the shadowing
property, Blank \cite{rr12} introduced the notion of the average
shadowing property in the study of chaotic dynamical systems.
Essentially it allows great errors, but they must be compensated
with small errors.

In the other hand, Eirola \textit{et al} \cite{rr13} posed the
notion of the limit-shadowing property. From the numerical point
of view this property on a dynamical system $X$ means that if we
apply a numerical method of approximation to $X$ with ``improving
accuracy" so that one step errors tend to zero as time goes to
infinity then the numerically obtained trajectories tend to real
ones. Such situations arise, for example, when one is not so
interested on the initial (transient) behavior of trajectories but
wants to reach areas where ``interesting things" happen (e.g.
attractors) and then improve accuracy. In the sequence, Rongbao Gu
\cite{rr14} introduced the notion of the asymptotic average
shadowing property for flows. This is a certain generalization of
the limit-shadowing property in random dynamical systems.

It could be checked that these weaker shadowing properties are
present in hyperbolic dynamics. Thus, following Komuro, a natural
question is to decide if the Lorenz systems has some of these
weaker shadowing properties. We remark also, as it is well known,
that the analysis of shadowing on flows becomes more complicated
than the analysis for diffeomorphims, due to the presence of
reparametrizations of the systems on those concepts.

The purpose of this paper is to seek sufficient conditions over
the Lorenz map which implies these kinds of shadowing:
$\Delta$-asymptotic average shadowing property (abbrev.
$\Delta$-AASP), $\Delta$-limit shadowing property (abbrev.
$\Delta$-LSP) and $\Delta$-average shadowing property (abbrev.
$\Delta$-ASP). The constant $\Delta$ is an upper bound to the
variation of the respective reparametrizations (see \S
\ref{zz61}).


Let $(\Lambda , \varphi )$ be a geometric Lorenz flow with
Poincar\'{e} map $P:S^*\to S$ (see definitions on \S \,
\ref{zz61}). Let $f: \F^* \to \F$ be the map associated to the
foliation $\F$ on $S$ and $L^+$ and $L^-$ be the lateral leaves of
$\F$. Our main result is the following:

\begin{theorem}\label{zz9}
Lorenz flows $(\Lambda,\varphi)$ satisfying $f(L^+)\neq L^+$ or
$f(L^-)\neq L^-$ have neither $\Delta$-ASP, $\Delta$-LSP nor
$\Delta$-AASP for any $\Delta\geq0$.
\end{theorem}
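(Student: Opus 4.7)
The plan is to argue by contradiction, uniformly in the three shadowing notions. Assume without loss of generality that $f(L^+)\neq L^+$ as leaves of $\F$, the case $f(L^-)\neq L^-$ being symmetric. Since $f$ preserves the foliation and $L^+$ and $f(L^+)$ are disjoint leaves of a continuous foliation of the cross-section $S$, there exists $\delta_0>0$ such that $d(p,q)\geq \delta_0$ for every $p\in L^+$ and every $q\in f(L^+)$. The goal is to exhibit, for each of the three notions, a pseudo-orbit whose consecutive long passages near the singularity $\sigma$ always exit close to $L^+$, and then to show that no true orbit admitting a reparametrization of variation $\leq\Delta$ can track it, since it would have to lie simultaneously close to $L^+$ and to $f(L^+)$.

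First I would record the following standard facts about the geometric Lorenz model. Points on $S\cap W^s(\sigma)$ are absorbed by $\sigma$; the Poincar\'e return time $\tau(x)$ tends to $+\infty$ as $d(x,S\cap W^s(\sigma))\to 0$; from a fixed side, the first return accumulates on $L^+$ in the Hausdorff sense; and, because $f$ is continuous on $\F$ away from the leaf through $S\cap W^s(\sigma)$, the Poincar\'e image of any small tubular neighborhood of $L^+$ is contained in a small tubular neighborhood of $f(L^+)$. These ingredients provide the geometric skeleton for the construction.

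The pseudo-orbit is then built as a concatenation of orbit arcs with small jumps. Pick $x_n$ on one fixed side of $S\cap W^s(\sigma)$ with $d(x_n,S\cap W^s(\sigma))\to 0$, so that the return times $\tau_n:=\tau(x_n)$ diverge and $d(P(x_n),L^+)\to 0$. Follow the orbit of $x_n$ for time $\tau_n$, then ``jump'' by $\varepsilon_n\to 0$ to $x_{n+1}$, and iterate. By tuning the rates at which $\tau_n$ grows and $\varepsilon_n$ decays, this concatenation is realized simultaneously as a $\Delta$-average, $\Delta$-asymptotic-average, or $\Delta$-limit pseudo-orbit, depending on which property is being refuted. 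Now suppose a true orbit $\varphi^{h(t)}(y)$, with $h$ an increasing homeomorphism of total variation at most $\Delta$, traces this pseudo-orbit in the relevant sense. Just after the $n$-th jump, the shadowing orbit is $\varepsilon$-close to $P(x_n)$ and hence to $L^+$; after one further Poincar\'e period it is forced $\varepsilon$-close to $f(L^+)$; but the pseudo-orbit, restarted at $x_{n+1}$, is at that same instant again $\varepsilon$-close to $L^+$. For $\varepsilon$ small this produces a recurrent spatial gap of at least $\delta_0/2$.

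The main technical obstacle is converting this recurring geometric gap into a contradiction with each of the three defining inequalities. For $\Delta$-LSP the pointwise error must vanish, so a persistent gap of size $\delta_0/2$ already closes the argument. For $\Delta$-ASP and $\Delta$-AASP one must instead exhibit a positive lower bound on the relevant time-average of the error; here the finiteness of the variation budget $\Delta$ becomes essential, since it permits the reparametrization $h$ only a bounded aggregate dilation of time, whereas the construction produces infinitely many uncompensated discrepancies of spatial size at least $\delta_0/2$ on a set of times whose density is estimated in terms of $\tau_n$. Controlling this interplay between the bounded-variation reparametrization and the diverging transit times $\tau_n$ is the delicate step; once it is in place, the contradiction follows uniformly in the three settings.
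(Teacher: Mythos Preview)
Your pseudo-orbit construction is internally inconsistent. You require each $x_n$ to lie close to the singular leaf $L_0=S\cap W^s(\sigma)$ (so that $\tau_n\to\infty$ and $P(x_n)\to L^+$), and you also require the jump from $\varphi(\tau_n,x_n)=P(x_n)$ to $x_{n+1}$ to have size $\varepsilon_n\to 0$. But $P(x_n)$ accumulates on $L^+$ while $x_{n+1}$ accumulates on $L_0$; these leaves are at fixed positive distance in $S$, so $\varepsilon_n\geq d(L^+,L_0)-o(1)$ is bounded away from zero. The sequence you describe is therefore not a limit-pseudo-orbit and cannot be tuned into a $\delta$-average or asymptotic-average pseudo-orbit for small $\delta$. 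Even granting some repair of the construction, your separation argument has a timing flaw: the shadowing orbit, once near $L^+$, returns to $S$ near $f(L^+)$ after a \emph{bounded} time (since $L^+$ is far from $L_0$), whereas the pseudo-orbit restarted at $x_{n+1}$ near $L_0$ takes the \emph{divergent} time $\tau_{n+1}$ to reach $L^+$; at the instant you want to compare them the pseudo-orbit is still in the linearized region near $\sigma$, not near $L^+$. Finally, $Rep(\Delta)$ consists of reparametrizations with all difference quotients in $[1-\Delta,1+\Delta]$, not maps of bounded total variation; there is no finite ``variation budget'' that gets exhausted, so that heuristic cannot close the argument for the averaged properties.

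The paper's construction is substantially more delicate and does not proceed by comparing $L^+$ with $f(L^+)$ at a single return. It builds blocks of four points: two on opposite branches of $W^u(\sigma)$ at prescribed small distance from $\sigma$, interleaved with points $x_{4k+1}\in S^+$ near $L^+$ and $x_{4k+3}\in S^-$ near $f(L^-)$, each chosen so that its forward $f$-orbit lands \emph{exactly} on $L_0$ (using density of $\bigcup_{i\geq 1} f^{-i}(L_0)$). Every arc thus ends arbitrarily close to $\sigma$, making all jumps genuinely of size $\Gamma/2^{|k|}$. The key separation lemma then uses the monotonicity of $f$ on each half of $S$ together with the imposed strict ordering $L^-<\F_{x_{4k+3}}<f(L^-)$ to force, within every block, the true orbit and the pseudo-orbit to eventually hit different halves $S^\pm$ (hence different regions $U^\pm$, $\Sigma^\pm$), producing a gap $\geq 2\Gamma$ on a sub-interval of length $\geq\beta$ inside each $[s_{4k},s_{4k+5})$. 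This gives the uniform lower bound $\beta\Gamma/5$ on the averaged integral for every $y$ and every $h\in Rep(\Delta)$, which simultaneously refutes all three properties.
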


This paper is organized as follows: in \S \, \ref{zz61} and \S \,
\ref{zz116} we give the precise definitions of the objects used in
the statement above. In \S \, \ref{zz52} we prove Theorem
\ref{zz9}.

\section{Various types of Shadowing}\label{zz61}

There are several types of shadowing in the literature. In this
section we define the ones which will be worked in this paper.

Let
\begin{equation*}
    Rep=\{g:\mathbb{R}\to \mathbb{R}: \hspace{.166cm}g\hspace{.166cm}
    \textmd{is}\hspace{.166cm} \textmd{a}\hspace{.166cm}
    \textmd{monotone}\hspace{.166cm} \textmd{increasing}\hspace{.166cm}
    \textmd{homeomorphism} \hspace{.166cm}\textmd{with} \hspace{.166cm}g(0)=0 \}.
\end{equation*}
Fixed $\Delta \in R^+$, define
\begin{equation*}
    Rep(\Delta)=\left\{g\in Rep: \left|\frac{g(s)-g(t)}{s-t}-1\right|\leq \Delta, \hspace{.166cm}
    \textmd{for}  \hspace{.166cm}\textmd{every}  \hspace{.166cm}s,t\in \mathbb{R}\right\}.
\end{equation*}


A sequence $(x_i,t_i)_{i\in \mathbb{Z}}$ is a
$\delta$-\textit{average-pseudo orbit} of $\varphi$ if $t_i\geq 1$
$\forall$ $i\in \mathbb{Z}$ and there is a positive integer $N$
such that for any $n\geq N$ and $k\in \mathbb{Z}$ we have
\begin{equation*}\label{zz78}
    \frac{1}{n}\sum_{i=1}^nd(\varphi({t_{i+k}},x_{i+k}),x_{i+k+1})<\delta.
\end{equation*}
A $\delta$-average-pseudo orbit, $(x_i,t_i)_{i\in \mathbb{Z}}$, of
$\varphi$ is \emph{$\Delta$-positively $\epsilon$-shadowed in
average} by the orbit of $\varphi$ through
 $x$, if there exists $h \in Rep(\Delta)$ such that
\begin{equation}\label{zz44}
    \limsup_{n\to\infty}\frac{1}{n}\sum_{i=1}^n\int_{s_i}^{s_{i+1}}d(\varphi({h(t)},x), \varphi({t-s_i},x_i))dt<\epsilon,
\end{equation}
where $s_0=0$ and $s_n=\sum_{i=0}^{n-1}t_i$, $n\in \mathbb{N}$. It
is \emph{$\Delta$-negatively $\epsilon$-shadowed in average} by
the orbit of $\varphi$ through $x$ if there is $\tilde{h}\in
Rep(\Delta)$ for which the limit \eqref{zz44} is true when
replacing $h$ by $\tilde{h}$ and the limits of integration by
$-s_{-i}$ and $-s_{-i+1}$ (in this case
$s_{-n}=\sum_{i=-n}^{-1}t_i$).

\begin{definition}\label{zz46}
The flow $\varphi$ has the \emph{$\Delta$-average shadowing
property} (abbrev. $\Delta$-ASP) if for any $\epsilon>0$ there
exists $\delta>0$ such that any $\delta$-average-pseudo orbit of
$\varphi$ is both $\Delta$-positively (negatively)
$\epsilon$-shadowed in average by some orbit of $\varphi$.
\end{definition}

A sequence $(x_i,t_i)_{i\in \mathbb{Z}}$ is a \textit{limit-pseudo
orbit} of $\varphi$ if $t_i>1$ $\forall$ $i\in \mathbb{Z}$ and
\begin{equation*}\label{zz77}
    \lim_{|i|\rightarrow\infty}d(\varphi(t_i,x_i),x_{i+1})=0.
\end{equation*}
A limit-pseudo orbit, $(x_i,t_i)_{i\in \mathbb{Z}}$, of $\varphi$
is $\Delta$-\textit{positively shadowed in limit} by an orbit of
$\varphi$ through $x$ if there is $h \in Rep(\Delta)$ such that
\begin{equation*}\label{zz49}
    \lim_{i\to\infty}\int_{s_i}^{s_{i+1}}d(\varphi({h(t)},x), \varphi(t-{s_i},
    x_i))dt=0.
\end{equation*}
Analogously as before we define when a limit-pseudo orbit is
$\Delta$-\textit{negatively shadowed in limit} by an orbit.

\begin{definition}\label{zz47}
The flow $\varphi$ has the $\Delta$-\emph{limit shadowing
property} (abbrev. $\Delta$-LSP) if every limit-pseudo orbit is
both $\Delta$-positively (negatively) shadowed in limit by an
orbit of $\varphi$.
\end{definition}

A sequence $(x_i,t_i)_{i\in \mathbb{Z}}$ is an \textit{asymptotic
average-pseudo orbit} of $\varphi$ if $t_i\geq 1$ $\forall$ $i\in
\mathbb{Z}$ and
\begin{equation*}
    \lim_{n\to \infty}\frac{1}{n}\sum_{i=-n}^nd(\varphi({t_i},x_i),x_{i+1})=0.
\end{equation*}
An asymptotic average-pseudo orbit, $(x_i,t_i)_{i\in \mathbb{Z}}$,
of $\varphi$ is $\Delta$-\textit{positively asymptotically
shadowed in average} by an orbit of $\varphi$ through $x$ if there
exists $h\in Rep(\Delta)$ such that
\begin{equation*}\label{zz45}
    \lim_{n\to\infty}\frac{1}{n}\sum_{i=0}^n\int_{s_i}^{s_{i+1}}d(\varphi({h(t)},x), \varphi({t-s_i},x_i))dt=0.
\end{equation*}
Similarly we define when an asymptotic average-pseudo orbit is
$\Delta$-\textit{negatively asymptotic shadowed in average} by an
orbit.

\begin{definition}\label{zz48}
The flow $\varphi$ has the $\Delta$-\emph{asymptotic average
shadowing property} (abbrev. $\Delta$-AASP) if every asymptotic
average-pseudo orbit is both $\Delta$-positively (negatively)
asymptotically shadowed in average by an orbit of $\varphi$.
\end{definition}

\section{Geometric Lorenz Flows}\label{zz116}

\subsection{Construction}


Let $S^3 = \mathbb{R}^3 \cup \{ \infty \}$ be the $3$-sphere. The
Geometric Lorenz Attractor is an attractor set in $S^3$ of a flow
denoted by $Y_t$ that we are about to describe. This attractor
has, as an isolating block, a solid bitorus $U$ in $\mathbb{R}^3$
such that the flow $Y_t$ is inwardly transverse to the boundary of
$U$. In $S^3 \backslash U$ the flow $Y_t$ has three saddle type
hyperbolic singularities with stable complex eigenvalues and a
source in $\{\infty\}$. Define

$$\Lambda = \bigcap_{t\geq0}Y_t(U)$$

\noindent as the maximal $Y_t$-invariant set in $U$. The set
$\Lambda$ called the \textit{Geometric Lorenz Attractor}. See
Figure \ref{zz118}. This geometric model is motivated by the
Lorenz field
\begin{equation}\label{zz131}
    X(x, y, z) = (-ax + ay, rx - y - xz, xy - bz)\; ,  \;\;\;\;\ a, r, b > 0
\end{equation}


\begin{figure}[h]
\begin{center}
\subfigure[\label{zz117}Behavior near the origin.]{
\includegraphics[scale=0.3855]{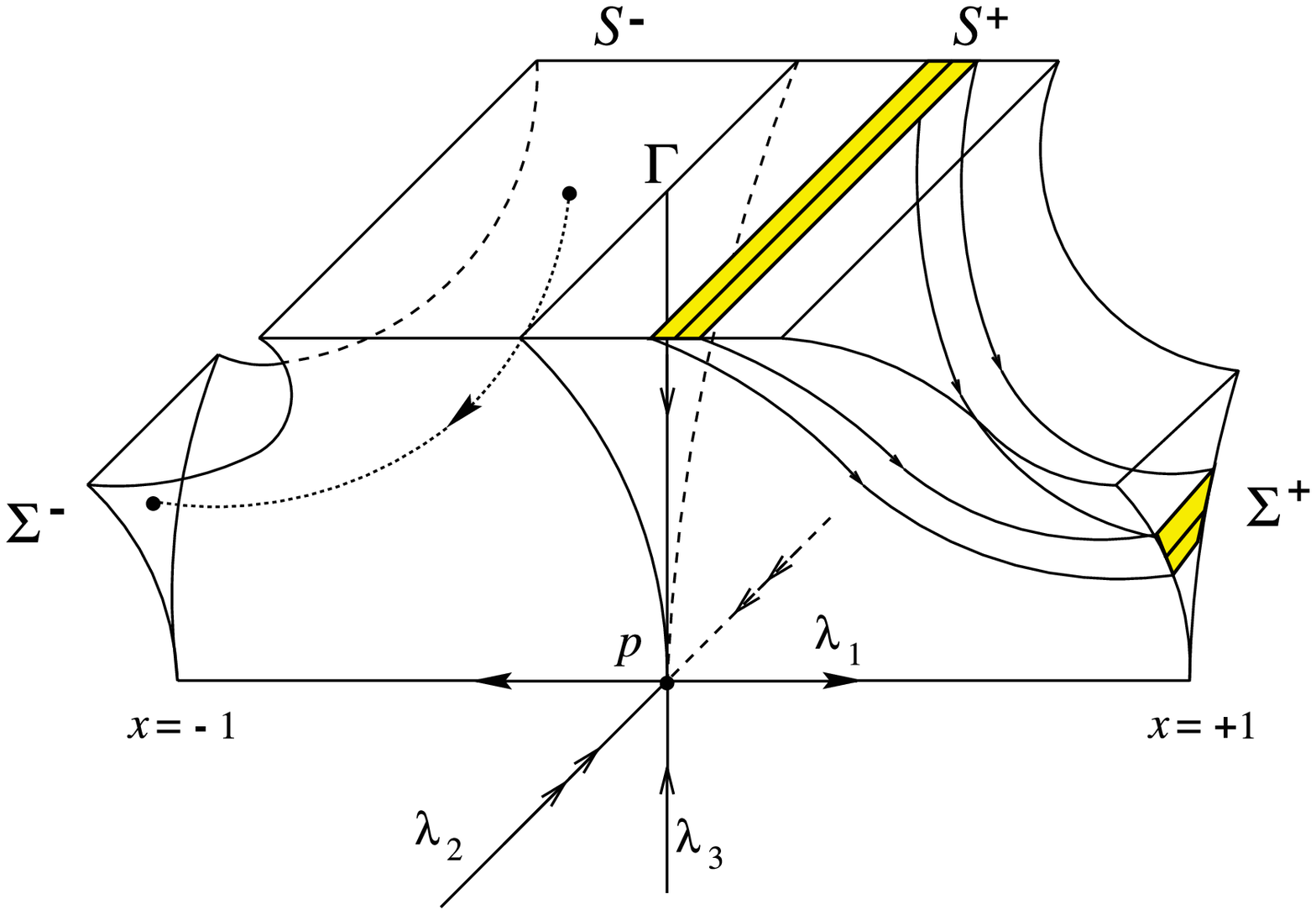}}
\subfigure[\label{zz118} The flow takes $\Sigma^\pm$ to $S$.]{
\includegraphics[scale=0.4255]{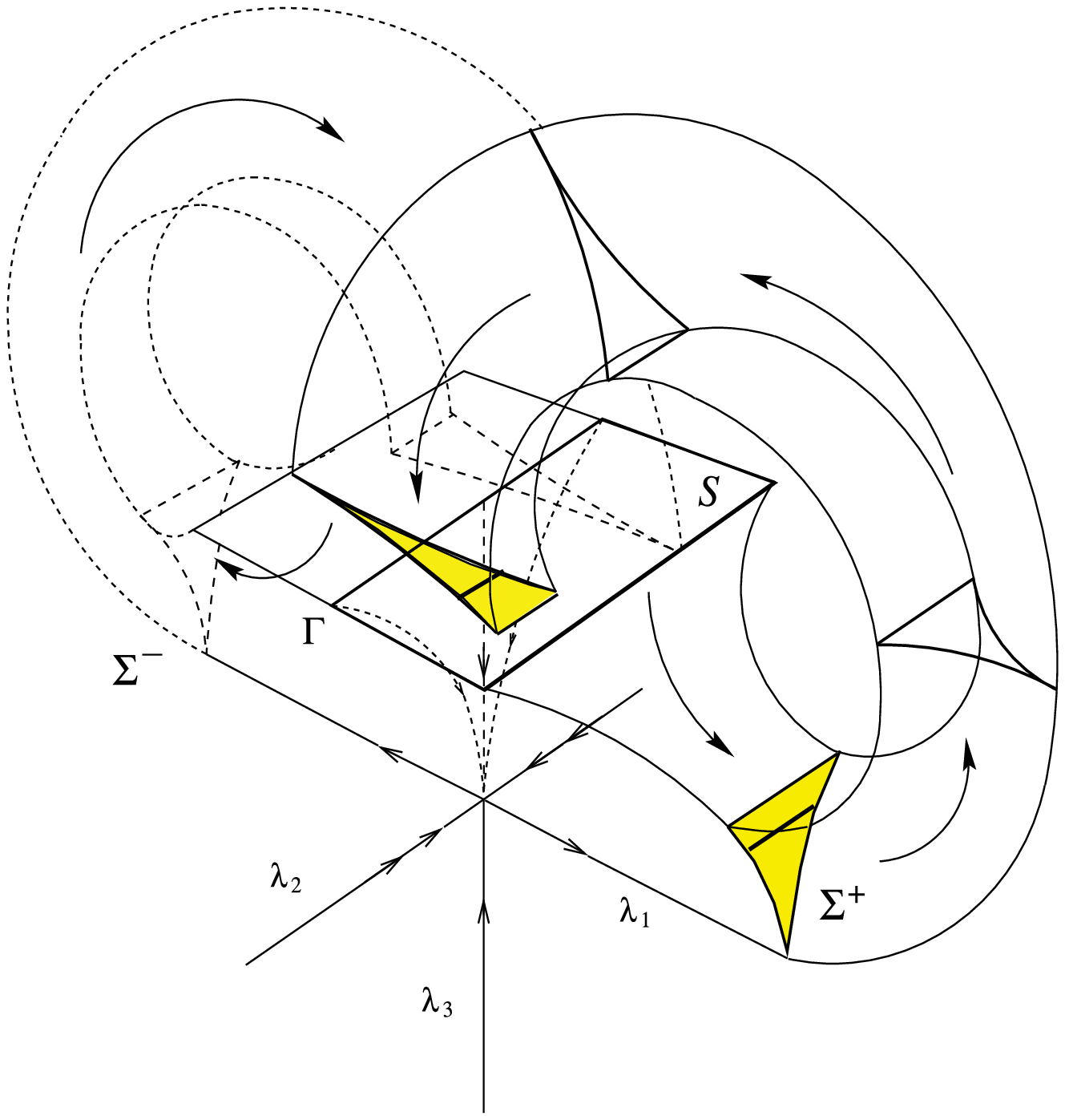}}

\caption{ Construction of the Geometric Lorenz flow}
\end{center}
\end{figure}

\noindent which was resulted of a tentative of modeling the
weather forecast in the years of sixty (1963). When the parameters
in \eqref{zz131} are $a = 10$, $r = 28$ and $b = 8/3$ then the
numeric simulation of this field exhibits a behavior which is
similar to the field $Y$ called Geometric Lorenz model, whose was
introduced by Guckenheimer (1976) \cite{rr4} and by Shilnikov
\cite{rr7}. To understand this geometric model, first consider the
flow $X_t$ associated to the Lorenz field near the origin $O=(0,
0, 0)$. Analogously, the field $Y$ has a hyperbolic singularity in
$O$ and, by Hartman-Grobman Theorem, it is conjugated to the
linearized equations in a neighborhood of the origin
$$ x'= \lambda_1 x , \hspace{.8cm}  y' = \lambda_2 y  ,\hspace{.8cm} z'=\lambda_3 z .$$
Solving this system with initial datas $(x(0), y(0), z(0)) = (x_0,
y_0, 1)$ we have:
$$x(t) = x_0(e^t)^{\lambda_1}, \hspace{.8cm}  y(t) =
y_0(e^t)^{\lambda_2}, \hspace{.8cm} z(t) = (e^t)^{\lambda_3}.$$

Fix $x_0 > 0$ and let $T$ be the positive time for which the orbit
intersects the plane $x = 1$, that is, $x(T) = 1$. Then $ e^T =
(x_0)^{{1} / {\lambda_1}}$ and so
$$ x(T) = 1,\hspace{.8cm}  y(T) = y_0(x_0)^{-\lambda_2 / {\lambda_1}}, \hspace{.8cm} z(T) =
(x_0)^{-\lambda_3 /{\lambda_1}}.  $$

Let $S = \{(x, y, 1) : |x| \leq 1/2, |y| \leq 1/2\}$ be a
transversal section to the fiel $Y$ such that the first return map
$P$ be defined in $S^{\ast} = S \setminus \{x = 0\}$. The line $x
= 0$ in $S$ is contained in the intersection between $W^s(0, Y )$
and $S$. Let
$$P : S^{\ast}\rightarrow int(S): \,\, p\mapsto P(p)$$
be defined by $P(p) = Y_{\tau} (p)$, where $\tau$ is the first
positive time such that $Y_{\tau} (p) \in S$. Assume the following
hypothesis over the field $Y$ (for more details see [GH90], p.
273):
\\

\noindent (h1) The point $O$ has eigenvalues $\lambda_1$,
$\lambda_2$, $\lambda_3$ such that $0 < -\lambda_3 < \lambda_1 <
-\lambda_2$, where $\lambda_3$ is the eigenvalue of the $z$-axis,
which is supposed to be $Y_t$-invariant.
\\

\noindent(h2) There exists a foliation $\F^s$ of $S$ whose
vertical leaves are such that: if $L \in \F^s$ and $P$ is defined
in $L$, then $P(L)$ stays contained in a leaf of $\F^s$. The
foliation $\F^s$ is part of the strong stable manifold of the flow
in the attractor which can be extended to a neighborhood of the
attractor \cite{rob}.
\\

\noindent(h3) Every point of $S^\ast$ returns to $S$, and the
return map $P$ is enough expansive in the direction which is
transverse to the leaves of $\F^s$.
\\

\noindent(h4) The flow is symmetric with respect to the rotation
$\theta= \pi$ around the $y$-axis.
\\

These four hypothesis define the \emph{Geometric Lorenz Flow}.
Analytically, these hypothesis may be reformulated by a coordinate
system $(x,y)$ over $S$ such that $P$ has the following
properties:
\\

\noindent(P1) The leaves of $\F^s$ are given by $x = c$, with
$-1/2\leq x \leq 1/2$.
\\

\noindent(P2) There are functions $f$ and $g$ such that $P$ has
the form
$$P(x, y) = (f(x), g(x, y))$$

for $x \neq 0$ and $P(-x,-y) = -P(x, y)$.
\\

\noindent(P3) $f'(x) \geq \lambda > \sqrt{2}$, for all $x \neq 0$
and $\lim_{x \rightarrow 0} f'(x) = \infty$.
\\

\noindent(P4) $0 < \dfrac{\partial g}{\partial y }<\delta<1$, for
all $x\neq 0$ e $\lim_{x \rightarrow 0} \dfrac{\partial
g}{\partial y }=0 $


\subsection{Foliations}

In the text, the leaves of the foliation $\F^s$, of hypothesis
$(h2)$, will be identified to the lines in $S$ whose
$x$-coordinate is fixed. For simplicity we will denote such a
foliation by $\F$ and its leaves by

\begin{equation*}
    \F_{x_0}=\left\{ (x_0,y,1)\in \mathbb{R}^3 \, | \, y\in \left[-\frac{1}{2}, \frac{1}{2}\right]
    \right\}.
\end{equation*}
Additionally, we will denote by
\begin{equation*}
    L^-=\F_{-1/2}, \hspace{2cm} L_0 = \F_0,
    \hspace{2cm}L^+=\F_{1/2}.
\end{equation*}
We call $L_0$ of \textit{singular leaf} and denote
$\F^*=\F\setminus L_0$. The one-dimensional map $f$, (P2), induces
a map $\hat{f}$ given by
\begin{align*}
    \hat{f}:\F^*& \longrightarrow \F \\
    \F_x & \longmapsto \F_{f(x)}.
\end{align*}
Whenever there is no ambiguity we shall denote $\hat{f}$ by $f$.

We define an \textit{order relation} on $\F$ by:
\begin{equation*}
    \F_x\leq \F_y \Leftrightarrow x \leq y
\end{equation*}
where $x, y \in [-1/2, 1/2]$.

\section{Proof of Theorem \ref{zz9}}\label{zz52}

The proof of Theorem \ref{zz9} will be developed in the next three
subsections. In \S \, \ref{zz113}, we define a pseudo orbit. In \S
\, \ref{zz114}, we prove some technical lemmas. Lastly, in \S \,
\ref{zz115}, we prove the theorem itself.

From now on, $(\Lambda,\varphi)$ is a geometric Lorenz flow, $P:
S^* \to S$ is the associated Poincar\'{e} map and $f: \F^* \to \F$
is the map associated to the foliation $\F$ over $S$.

\subsection{The pseudo orbit}\label{zz113}

This section concerns to the description of the pseudo orbit to be
used in the proof of the Theorem \ref{zz9}. We will develop it
under the assumption that
\begin{equation*}
    f(L^-)>L^- \hspace{.3cm}\textmd{and}\hspace{.3cm} f(L^+)=
    L^+
\end{equation*}
(recall the order relation on the set of leaves of $S$). The
remainder case, which is $f(L^-)\geq L^-$ and $f(L^+)<L^+$, will
be commented in Remark \ref{zz111} (\S \, \ref{zz112}).

Recall that a pseudo orbit is a bi-sequence (when dealing with
flows) composed by points and times. We are going to construct
separately two sequences. Firstly the sequence of points and then
the sequence of times. Finally we will argue that such a sequence
is, in fact, a pseudo orbit in the sense of the three kinds of
pseudo orbit cited in \S \, \ref{zz61}.

\subsubsection{Sequence of points}\label{zz62}

Denote by $V$ the set contained in the stable manifold of the
singularity $\sigma$ =(0,0,0) and ``under" the singular leaf $L_0$
of $S$ (see Figure \ref{zz122}), namely
\begin{equation*}
    V= \bigcup_{x\in L_0}\varphi(\mathbb{R}^+,x).
\end{equation*}
The positive orbit of points in $V$ converges to $\sigma$, thus
they do not cross the section $S$. In turns, all the others points
cross $S$ in the future. Now, for any $x\in U\setminus V$ let
$\tau(x)$ be the time spent by the flow to intersect $S$,
\begin{equation*}
    \tau(x)=\min\{t\in \mathbb{R}^+_* \, | \, \varphi(t,x)\in S \},
\end{equation*}
and $\pi(x)$ be such an intersection point
\begin{equation}\label{zz66}
    \pi(x)=\varphi(\tau(x),x).
\end{equation}

Put
\begin{equation*}
    \pi^0(x)=x \hspace{.3cm}\textmd{and}\hspace{.3cm} \pi^n(x)=\pi(\pi^{n-1}(x))\hspace{.4cm} \forall \, n>0.
\end{equation*}
Fix a constant $\Gamma$ much smaller than the distance between the
lateral leaves:
\begin{equation}\label{zz120}
    \Gamma \ll d(L^-,L^+)
\end{equation}
(here $d(A,B)=\inf\{d(x,y)\,|\,x\in A$ and $y\in B \}$ for any
sets $A$ and $B$). We will construct a one-sided sequence
$(x_n)_{n\in \mathbb{N}}$ in four steps. In the construction we
assume that $k$ is an arbitrary natural number or zero.

\vspace{.2cm} \noindent\textit{Step 1.} \vspace{.05cm}

The terms of the sequence of type $x_{4k}$ are points in
$W^{u,-}(\sigma)$
    which verify
    \begin{equation*}
        d(x_{4k},\sigma)=\frac{\Gamma}{\sqrt{2}.2^{k}}.
    \end{equation*}
(see Figure \ref{zz121})

\vspace{.2cm} \noindent\textit{Step 2.} \vspace{.05cm}

In the terms of type $x_{4k+1}$ we will impose two conditions.
    From the above item, we have defined the term $x_0$ and we also have
    $\pi(x_{4k})=\pi(x_0)$. So, we take $x_{4k+1}\in S^+$
    satisfying a first condition
    \begin{equation}\label{zz67}
        d(x_{4k+1},\pi(x_0))=\frac{\Gamma}{2^{k}}.
    \end{equation}
    Now, the positive orbit of any point in the set
    $U_{i=1}^{\infty}f^{-i}(L_0)$ intersects $L_0$. In turn
    such a set is dense in $S$. Based on this fact, we impose a second condition on $x_{4k+1}$:
    \begin{equation*}
        \varphi(s^1_k,x_{4k+1})\in L_0 \hspace{.3cm}\textmd{for}\hspace{.166cm}
        \textmd{some}\hspace{.3cm} s^1_k>1.
    \end{equation*}
    Note that, naturally, we have
    \begin{equation}\label{zz110}
        \F_{x_{4k+1}}<L^+=\F_{\pi(x_{0})}.
    \end{equation}

\vspace{.2cm} \noindent\textit{Step 3.} \vspace{.05cm}

Similar as in Step 1, we take $x_{4k+2}\in W^{u,+}(\sigma)$
verifying
    \begin{equation}\label{zz70}
        d(x_{4k+2},\sigma)=\frac{\Gamma}{\sqrt{2}.2^{k}}.
    \end{equation}

\vspace{.2cm} \noindent\textit{Step 4.} \vspace{.05cm}

Finally, the terms of type $x_{4k+3}$ belong to $S^-$ and we will
demand three
    conditions to them. Before this, note that
    $\pi(x_{4k+2})=\pi(x_{2})$ (and thus
    $\pi^2(x_{4k+2})=\pi^2(x_{2}))$ and we can reduce $\Gamma$
    such that $d(\pi(L^{-}),L^{-}) > \frac{\Gamma}{2^{k}}$.
    So, the first two conditions are:
    \begin{equation*}
        d(x_{4k+3},\pi^2(x_2))=\frac{\Gamma}{2^{k}}
    \end{equation*}
and
    \begin{equation*}
        \varphi(s^3_k,x_{4k+3})\in L_0 \hspace{.2cm}\textmd{for}\hspace{.2cm}
         \textmd{some}\hspace{.2cm}s^3_k>1.
    \end{equation*}
    In addition, note that $\F_{\pi(x_{4k+2})}=L^-$ and
    $\F_{\pi^2(x_{4k+2})}=f(L^-)$. Moreover we know that
    $f(L^-)>L^-$. These facts allow us to choose $x_{4k+3}\in S^-$
    satisfying the third condition:
    \begin{equation}\label{zz43}
        \F_{\pi(x_{4k+2})}=L^-<\F_{x_{4k+3}}<f(L^-)=\F_{\pi^2(x_{4k+2})}.
    \end{equation}

Now we extend the above sequence to a two-sided sequence
$(x_n)_{n\in \mathbb{Z}}$ by setting
\begin{equation}\label{zz75}
    x_{4k+i}=x_{4(-k-1)+i},
\end{equation}
for $k\leq -1$ and $i\in \{0, \dots , 3\}$. Moreover, the
expressions
\begin{equation*}
    \frac{\Gamma}{2^k} \hspace{.2cm}\textmd{and}\hspace{.2cm} \frac{\Gamma}{\sqrt{2}.2^k}
\end{equation*}
in the above equalities must be replaced respectively by
\begin{equation}\label{zz128}
    \frac{\Gamma}{2^{(-k)}} \hspace{.2cm}\textmd{and}\hspace{.2cm}
    \frac{\Gamma}{\sqrt{2}.2^{(-k)}}.
\end{equation}
This ends the construction of the sequence of points.

\subsubsection{Sequence of times}\label{zz63}

In the same way of the last subsection we will construct a
sequence of times $(t_n)_{n\in \mathbb{N}}$ in four steps. In the
construction we assume that $k$ is an arbitrary natural number or
zero.

\vspace{.2cm} \noindent\textit{Step 1.} \vspace{.05cm}

The terms of type $t_{4k}$ are the times that the
    points $x_{4k}$ spend to reach $\pi(x_{4k})\in S$, namely:
    \begin{equation}\label{zz65}
        t_{4k}=\tau(x_{4k}).
    \end{equation}

\vspace{.2cm} \noindent\textit{Step 2.} \vspace{.05cm}

Recall that $x_{4k+1}$ spends a time $s_k^1$ (through the flow) to
    reach the point $\varphi(s^1_k,x_{4k+1})$ in the singular leaf $L_0$. In turn, such a point
    tends to the singularity. So there exists a time
    $\tilde{s}_k^1$ verifying
    \begin{equation}\label{zz69}
        d(\varphi(s^1_k+\tilde{s}^1_k,x_{4k+1}),\sigma)=\frac{\Gamma}{\sqrt{2}.2^{k}}.
    \end{equation}
    Therefore we define the terms of
    type $t_{4k+1}$ as
    \begin{equation}\label{zz68}
        t_{4k+1}=s^1_k + \tilde{s}^1_k.
    \end{equation}

\vspace{.2cm} \noindent\textit{Step 3.} \vspace{.05cm}

The terms of type $t_{4k+2}$ are the times that the
    points $x_{4k+2}$ spend to reach $\pi^2(x_{4k+2})\in S$, namely:
    \begin{equation*}
        t_{4k+2}=\tau(x_{4k+2})+\tau(\pi(x_{4k+2})).
    \end{equation*}

\vspace{.2cm} \noindent\textit{Step 4.} \vspace{.05cm}

As in item (iii), $s^3_k$ is a time such that
    $\varphi(s^3_k,x_{4k+3})$ lies in the singular leaf $L_0$. So there
    is $\tilde{s}^3_k$ satisfying
    \begin{equation*}
        d(\varphi(s^3_k+\tilde{s}^3_k,x_{4k+3}),\sigma)=\frac{\Gamma}{\sqrt{2}.2^{k}}.
    \end{equation*}
    Therefore we define the terms of
    type $t_{4k+3}$ as
    \begin{equation*}
        t_{4k+3}=s^3_k + \tilde{s}^3_k.
    \end{equation*}

Now we extend the above sequence to a two-sided sequence
$(t_n)_{n\in \mathbb{Z}}$ by setting
\begin{equation}\label{zz76}
    t_{4k+i}=t_{4(-k-1)+i},
\end{equation}
for $k\leq -1$ and $i\in \{0, \dots , 3\}$. As in \eqref{zz128},
we replace $k$ by $-k$. This ends the construction of the sequence
of times.

\subsubsection{The pseudo orbit}\label{zz112}

\begin{definition}\label{zz82}
Let
\begin{equation}\label{zz109}
    (x_n,t_n)_{n\in \mathbb{Z}}
\end{equation}
be the bi-sequence whose component sequence $(x_n)_{n\in
\mathbb{Z}}$ is the sequence of points constructed in section
\ref{zz62} and the component sequence $(t_n)_{n\in \mathbb{Z}}$ is
the sequence of times constructed in section \ref{zz63}.
\end{definition}

\begin{figure}[h]
\begin{center}
\includegraphics[scale=0.67]{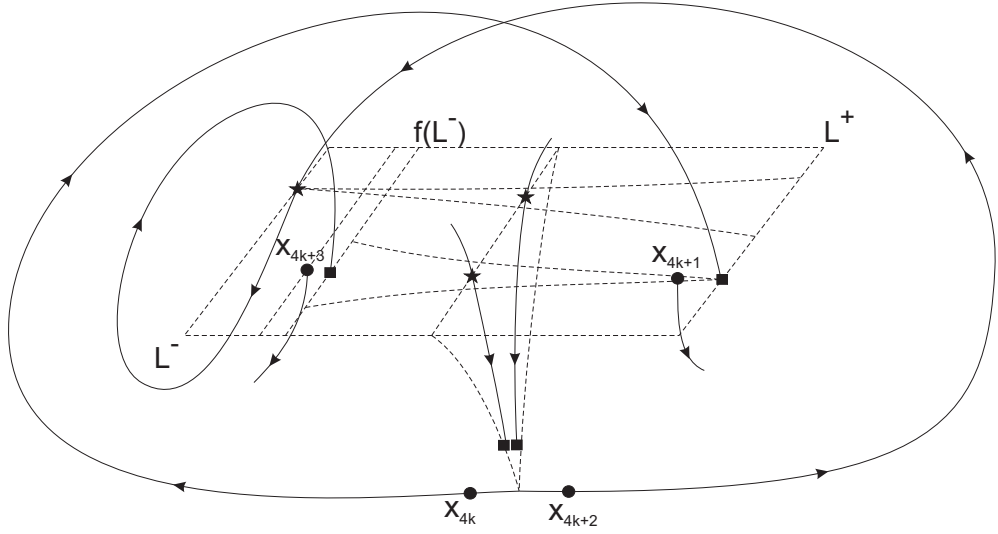}
\caption{\label{zz121} $k\geq 0$. The points in \textit{shape of
circle} compose the pseudo orbit. \textit{Square points}: in $L^+$
is $\pi(x_{4k})$, in $f(L^-)$ is $\pi^2(x_{4k+2})$, near the
origin at right is $\varphi(t_{4k+1},x_{4k+1})$ and at left is
$\varphi(t_{4k+3},x_{4k+3})$. \textit{Star points}: in the
singular leaf at right is $\varphi(s^1_k ,x_{4k+1})$, at left is
$\varphi(s^3_k,x_{4k+3})$ and in $L^-$ is $\pi(x_{4k+2})$.}
\end{center}
\end{figure}

\begin{remark}\label{zz111}
To the case
\begin{equation*}
    f(L^-)= L^- \hspace{.3cm}\textmd{and}\hspace{.3cm} f(L^+)<
    L^+
\end{equation*}
 we set another bi-sequence, say $(\overline{x}_n,\overline{t}_n)_{n\in
 \mathbb{Z}}$, whose construction is the same of \eqref{zz109}, up
 to demanding
 \begin{equation}\label{rrcc}
    \F_{\pi^2(x_{4k})}=f(L^+)<\F_{x_{4k+1}}< L^+=\F_{\pi(x_0)},
\end{equation}
instead of \eqref{zz110}, and
\begin{equation*}
    L^-=\F_{\pi(x_{4k+2})}<\F_{x_{4k+3}},
\end{equation*}
instead of \eqref{zz43} (the \textit{construction} of the sequence
of times does not change).

Additionally, to the case
\begin{equation*}\label{rrbb}
    f(L^-)> L^- \hspace{.3cm}\textmd{and}\hspace{.3cm} f(L^+)<
    L^+
\end{equation*}
we order as in \eqref{rrcc} and \eqref{zz43}.


The proofs of the further results are done only to the bi-sequence
$(x_n,t_n)_{n\in \mathbb{Z}}$. In Lemma \ref{zz102}, the analyzed
cases are simetricaly the same. The remainder facts (which deal
only with distances) are verbatim the same.

\end{remark}

\begin{proposition}\label{zz101}
The bi-sequence
\begin{equation}\label{zz42}
    (x_n,t_n)_{n\in \mathbb{Z}},
\end{equation}
given by Definition \ref{zz82}, is a $\delta$-average-pseudo orbit
for any $\delta>0$. Moreover, it is also a limit-pseudo orbit and
an asymptotic average-pseudo orbit.
\end{proposition}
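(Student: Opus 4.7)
The plan is to reduce all three claims to a single estimate: that the ``jump'' at step $i$, namely
$$J_i := d(\varphi(t_i, x_i), x_{i+1}),$$
decays geometrically in $|i|$ and, in particular, the two-sided series $\sum_{i \in \mathbb{Z}} J_i$ converges. Once this is established, the limit-pseudo orbit condition and the asymptotic average-pseudo orbit condition follow immediately (the first because $J_i \to 0$, the second because $\frac{1}{n}\sum_{i=-n}^n J_i \leq \frac{1}{n}\sum_{i\in\mathbb Z}J_i \to 0$). For the $\delta$-average-pseudo orbit condition, pick $C = \sum_{i\in\mathbb{Z}} J_i$ and observe that for every $k \in \mathbb{Z}$,
$$\frac{1}{n}\sum_{i=1}^n J_{i+k} \leq \frac{C}{n},$$
so any $N > C/\delta$ works uniformly in $k$. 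Thus the entire proposition hinges on estimating $J_i$ by a geometrically decreasing bound in $|i|$.

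The estimate of $J_i$ is done case-by-case following the four types of indices in the construction (and their mirror images for $k \leq -1$, using the convention \eqref{zz128}, \eqref{zz76}). For $i = 4k$, we have $\varphi(t_{4k}, x_{4k}) = \pi(x_{4k}) = \pi(x_0)$ by \eqref{zz66} and \eqref{zz65}, and then \eqref{zz67} gives $J_{4k} = \Gamma/2^k$. For $i = 4k+2$, one uses that $\varphi(t_{4k+2}, x_{4k+2}) = \pi^2(x_{4k+2}) = \pi^2(x_2)$ together with the first condition of Step 4 to obtain $J_{4k+2} = \Gamma/2^k$. For $i = 4k+1$ and $i = 4k+3$, both $\varphi(t_i, x_i)$ and $x_{i+1}$ lie at distance $\Gamma/(\sqrt{2}\cdot 2^k)$ from $\sigma$ (by \eqref{zz69} and \eqref{zz70} for the former pair, and by the analogous equality in Step 4 for the latter), so the triangle inequality yields $J_i \leq \sqrt{2}\cdot \Gamma/(\sqrt{2}\cdot 2^k) = \Gamma/2^k$. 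Combining, $J_i \leq \Gamma/2^{\lfloor |i|/4 \rfloor}$ (up to the standard adjustment at small $|i|$), which is what we wanted.

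The step I would expect to require the most care is the estimate at indices $4k+1$ and $4k+3$: both endpoints sit near $\sigma$ at the prescribed distance, but one is arriving along the stable foliation (through $L_0$) and the other is placed on $W^{u,\pm}(\sigma)$. One must make sure that these points are not accidentally placed on opposite sides of a scale much larger than $\Gamma/2^k$; this is precisely why the construction prescribes the distance as $\Gamma/(\sqrt{2}\cdot 2^k)$ rather than $\Gamma/2^k$, so that the $\sqrt{2}$ factor in the worst-case triangle inequality absorbs exactly into the desired bound. Once this local estimate near $\sigma$ is nailed down, the two-sided extension \eqref{zz75}, \eqref{zz76} produces the same decay on the negative side, so $\sum_{i\in\mathbb Z}J_i$ is dominated by a convergent geometric series and all three pseudo-orbit properties follow simultaneously.
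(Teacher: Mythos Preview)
Your proposal is correct and follows essentially the same route as the paper: compute the jumps $J_i$ case by case to get a geometric bound $J_{4k+i}\le C/2^{|k|}$, then read off all three pseudo-orbit properties from summability. One small discrepancy: for $i=4k+1,4k+3$ the paper uses the orthogonality of the local stable and unstable directions at $\sigma$ to compute $J_i$ \emph{exactly} via Pythagoras as $\sqrt{(\Gamma/(\sqrt{2}\,2^k))^2+(\Gamma/(\sqrt{2}\,2^k))^2}=\Gamma/2^k$, whereas the plain triangle inequality you invoke would give $J_i\le 2\cdot \Gamma/(\sqrt{2}\,2^k)=\sqrt{2}\,\Gamma/2^k$ (not $\sqrt{2}\cdot\Gamma/(\sqrt{2}\,2^k)$ as you wrote); either bound is enough for your summability argument.
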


\begin{proof}

Define a (positive) sequence $(A_m)_{m\in \mathbb{Z}}$ by
\begin{equation*}
    A_m=d(\varphi(t_m,x_m),x_{m+1}).
\end{equation*}
Firstly \textit{we claim that}
\begin{equation}\label{zz129}
    A_{4k+i}=\frac{\Gamma}{2^{|k|}},
\end{equation}
for any $k\in \mathbb{Z}$ and any $i\in\{0,1,2,3\}$ (see Figure
\ref{zz130}).

\begin{figure}[h]
\begin{center}
\includegraphics[scale=0.45]{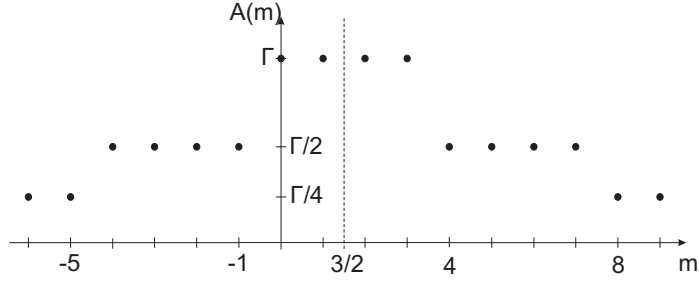}
\caption{\label{zz130} Graphic of the sequence $A_m$.}
\end{center}
\end{figure}

Indeed, suppose that $k\geq 0$. Then

\vspace{.2cm} \noindent\textit{(i)} \vspace{.05cm}
\begin{eqnarray*}
      d(\varphi(t_{4k},x_{4k}),x_{4k+1}) &\overset{\eqref{zz65}}{=}& d(\varphi(\tau(x_{4k}),x_{4k}),x_{4k+1})\notag \\
      \, &\overset{\eqref{zz66}}{=}& d(\pi(x_{4k}),x_{4k+1})\overset{\eqref{zz67}}{=}\frac{\Gamma}{2^k}.
\end{eqnarray*}

\vspace{.2cm} \noindent\textit{(ii)} \vspace{.05cm}

\begin{eqnarray*}
      d(\varphi(t_{4k+1},x_{4k+1}),x_{4k+2}) &=& \sqrt{\left[d(\varphi(t_{4k+1},x_{4k+1}),\sigma)\right]^2+[d(\sigma,x_{4k+2})]^2} \\
      \, &\overset{\eqref{zz68}}{=}& \sqrt{[d(\varphi(s_k^1+\tilde{s}_k^1,x_{4k+1}),\sigma)]^2+[d(\sigma,x_{4k+2})]^2} \\
      \, &\overset{\eqref{zz69},\eqref{zz70}}{=}& \sqrt{\left[\frac{\Gamma}{\sqrt{2}.2^k}\right]^2+
      \left[\frac{\Gamma}{\sqrt{2}.2^k}\right]^2}=\frac{\Gamma}{2^k}. \\
\end{eqnarray*}
The cases whose subscript index are $4k+2$ and $4k+3$ are
analogous to (i) and (ii) respectively. Summarizing we have
\begin{equation*}\label{zz125}
    d(\varphi(t_{4k+i},x_{4k+i}),x_{4k+i}=
    \frac{\Gamma}{2^{k}},
\end{equation*}
for $k\geq 0$ and $i\in \{0,1,2,3\}$. By the other hand, following
the same procedure, one can use the conversion formulas
\eqref{zz75}, \eqref{zz128} and \eqref{zz76}, to prove that
\begin{equation*}\label{zz126}
    d(\varphi(t_{4k+i},x_{4k+i}),x_{4k+i})=
    \frac{\Gamma}{2^{(-k)}},
\end{equation*}
for $k\leq -1$ and $i\in \{0,1,2,3\}$. This proves our claim.

From \eqref{zz129} we conclude that:

\vspace{.2cm} \noindent\textit{(a)} \vspace{.05cm}
\begin{equation*}
    \lim_{|m|\to \infty}A_m = 0.
\end{equation*}
that is, the sequence $(x_n,t_n)_{n\in \mathbb{Z}}$ is a
\emph{limit-pseudo orbit}.




Now we will verify that the sequence $(x_n,t_n)_{n\in \mathbb{Z}}$
is an \emph{asymptotic average-pseudo orbit}. In fact,

\begin{eqnarray*}
  \sum_{j=-n}^{n}d(\varphi(t_{j},x_{j}),x_{j+1}) \leq
  \sum_{j=0}^{2n}d(\varphi(t_{j},x_{j}),x_{j+1})  \leq
  4\sum_{j=0}^{2n}d(\varphi(t_{4j},x_{4j}),x_{4j+1}) \leq
  4\sum_{j=0}^{2n} \dfrac{\Gamma}{2^j}.
\end{eqnarray*}
Therefore
\begin{equation} \label{chocho}
    \lim_{n\to \infty}\dfrac{1}{n}\sum_{j=-n}^{n}d(\varphi(t_{j},x_{j}),x_{j+1}) = 0.
\end{equation}

Finally we are going to show that the bi-sequence $(x_n,t_n)_{n\in
\mathbb{Z}}$ is a \emph{$\delta$-average-pseudo orbit} for any
$\delta>0$. Given $\delta >0$ then we have, by \eqref{chocho},
that there exists $N>0$ such that for all $ n
> N$ and all $ u \in Z $ , we have:

\begin{eqnarray*}
  \frac{1}{n}\sum_{i=1}^{n}d(\varphi(t_{i+u},x_{i+u}),x_{i+u+1}) &\leq&
  \frac{1}{n}\sum_{i=-n}^{n}d(\varphi(t_{i},x_{i}),x_{i+1})< \delta.
\end{eqnarray*}

\end{proof}

\subsection{Technical lemmas}\label{zz114}

In this section we are going to prove two technical lemmas. Before
it, let us define an object and remaind some notations.

For any fixed point $y$ in $M$ the function $t\mapsto
\varphi(h(t),y)$ is continuous (because the flow is smooth and $h$
is a homeomorphism). Hence, from the compactness of $M$ we can set
the following definition.

\begin{definition}\label{zz103}
Let $h$ be a reparametrization in $Rep(\Delta)$. Then $\beta$ is a
positive real constant such that
\begin{equation}\label{zz85}
    d\left(\varphi(h(t),y),y\right)<\frac{\Gamma}{2}
\end{equation}
for all $t\in [-\beta,\beta]$ and all $y\in M$.
\end{definition}

Recall that if $(z_n,t_n)_{n\in \mathbb{Z}}$ is an arbitrary
bi-sequence then for any $n\in \mathbb{N}$ we denote by
\begin{equation*}
    s_n=\sum_{i=0}^{n-1}t_i \hspace{.4cm}\textmd{and}
    \hspace{.4cm}s_{-n}=\sum_{i=-n}^{-1}t_i.
\end{equation*}
Additionally, we set $s_0=0$.

\begin{lemma}\label{zz87}
Let $(x_n,t_n)_{n\in \mathbb{Z}}$ be the sequence \eqref{zz42}.
Take a point $y$ in $M$, a reparametrization $h$ in $Rep(\Delta)$
and an integer number $k$. If
\begin{equation}\label{zz83}
    d(\varphi(h(t),y),\varphi(t-s_{4k+u},x_{4k+u}))<2\Gamma
\end{equation}
for any $u\in \{0, \dots , 3\}$ and all $t\in
[s_{4k+u},s_{4k+u+1}-\beta)$ then
\begin{equation}\label{zz60}
    d(\varphi(h(t),y),\varphi(t-s_{4k+u},x_{4k+u}))<3\Gamma
\end{equation}
for any $u\in \{0, \dots , 3\}$ and all $t\in
[s_{4k+u},s_{4k+u+1})$.
\end{lemma}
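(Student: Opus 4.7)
The strategy is to extend the $2\Gamma$ bound from $[s_{4k+u},s_{4k+u+1}-\beta)$ to its closure $[s_{4k+u},s_{4k+u+1})$ via a triangle inequality anchored at the right endpoint $t_0:=s_{4k+u+1}-\beta$, with Definition~\ref{zz103} absorbing the two boundary errors. One works one $u\in\{0,1,2,3\}$ at a time and only needs to treat $t\in[t_0,s_{4k+u+1})$, since on $[s_{4k+u},t_0)$ the hypothesis already gives the stronger $2\Gamma$ bound.

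Fix such $u$ and $t$. By joint continuity of $\varphi$, $h$ and $d$, the strict hypothesis on $[s_{4k+u},t_0)$ yields, in the limit, the estimate
$$B:=d\bigl(\varphi(h(t_0),y),\varphi(t_0-s_{4k+u},x_{4k+u})\bigr)\le 2\Gamma.$$
Setting $y_0:=\varphi(h(t_0),y)$ and $z_0:=\varphi(t_0-s_{4k+u},x_{4k+u})$ and using the group property of the flow,
$$d\bigl(\varphi(h(t),y),\varphi(t-s_{4k+u},x_{4k+u})\bigr)\le A+B+C,$$
where $A=d(\varphi(h(t)-h(t_0),y_0),y_0)$ and $C=d(\varphi(t-t_0,z_0),z_0)$. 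Since $t-t_0\in[0,\beta]$ and $h\in Rep(\Delta)$ gives $|h(t)-h(t_0)|\le(1+\Delta)\beta$, Definition~\ref{zz103} (applied, if necessary, with $\beta$ chosen small enough to accommodate the factor $1+\Delta$ and to simultaneously control the unreparametrized flow) produces $A<\Gamma/2$ and $C<\Gamma/2$. Summing, $A+B+C<3\Gamma$, which is \eqref{zz60} on $[t_0,s_{4k+u+1})$ and completes the extension.

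The main obstacle is the mismatch between the way $\beta$ is introduced in Definition~\ref{zz103} (controlling the \emph{reparametrized} flow $\varphi\circ h$) and the appearance of the unreparametrized flow in the term $C$. Both are continuous in time and vanish at time $0$ uniformly in the base point by compactness of $M$, so shrinking $\beta$ if necessary resolves this technicality with no real loss; presumably this is implicit in the paper's choice of $\beta$. The same uniform small-time control also justifies Step~1, where the strict bound on the half-open interval is passed to a (possibly non-strict) bound at the closed endpoint $t_0$.
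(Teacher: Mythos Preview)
Your argument is correct and is essentially the paper's own proof, carried out directly rather than by contradiction: the paper assumes a bad time $t_0\in[s_{4k+u+1}-\beta,s_{4k+u+1})$ and shifts back by $\beta$ to contradict the hypothesis at $t_0-\beta$, while you anchor at $s_{4k+u+1}-\beta$ and push forward---the same triangle inequality with the same two $\Gamma/2$ corrections from Definition~\ref{zz103}. The technical point you flag about $\beta$ having to control the unreparametrized flow as well is equally implicit in the paper's use of \eqref{zz85} for the $q$-term, and your resolution (shrink $\beta$, using compactness of $M$) is exactly the intended one.
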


\begin{proof}
Fix $u\in \{0, \dots ,3\}$. We are going to verify the inequality
\eqref{zz60} for any $t\in [s_{4k+u},s_{4k+u+1})$.

If
\begin{equation*}
    t\in[s_{4k+u},s_{4k+u+1}-\beta)
\end{equation*}
then we get \eqref{zz60} directly from \eqref{zz83}.

Now we will analyze the remainder case:
\begin{equation*}
    t\in [s_{4k+u+1}-\beta ,s_{4k+u+1}).
\end{equation*}
We will proceed by contradiction. Assume that there exist
$t_0\in[s_{4k+u+1}-\beta,s_{4k+u+1})$ for which the inequality
\eqref{zz60} does no hold, that is,
\begin{equation}\label{zz84}
    d(\varphi(h(t_0),y),\varphi(t_0-s_{4k+u},x_{4k+u})>3\Gamma.
\end{equation}
To shorter the expressions we will denote by
\begin{equation*}
    p(t)=\varphi(h(t),y)
\end{equation*}
and
\begin{equation*}
    q(t)=\varphi(t-s_{4k+u},x_{4k+u})
\end{equation*}
for $t\in \mathbb{R}$. Then
\begin{eqnarray*}
  d(p(t_0-\beta),q(t_0-\beta)) &>& d(q(t_0),p(t_0-\beta))-d(q(t_0),q(t_0-\beta)); \\
  \, &>& \left[d(p(t_0),q(t_0))-d(p(t_0),p(t_0-\beta))\right]-d(q(t_0),q(t_0-\beta)); \\
  \, &\overset{\eqref{zz84}}{>}& 3\Gamma -\left[d(p(t_0),p(t_0-\beta))+d(q(t_0),q(t_0-\beta))\right]; \\
  \, &\overset{\eqref{zz85}}{>}& 3\Gamma -
  \left(\frac{\Gamma}{2}+\frac{\Gamma}{2}\right)=2\Gamma.
\end{eqnarray*}
Summarizing, we have
\begin{equation*}
    d(\varphi(h(t_0-\beta),y)\varphi((t_0-\beta)-s_{4k+u},x_{4k+u}))>2\Gamma.
\end{equation*}
On the other hand, note that the time $t_0-\beta$ belongs to the
interval $[s_{4k+u},s_{4k+u+1}-\beta)$. This contradicts the
inequality \eqref{zz83} of our hypothesis. This contradiction ends
the proof.

\end{proof}

Given a real time $t$ then we can associate the following point in
$M$:
\begin{equation*}
    \varphi(t-s_n,x_n)
\end{equation*}
where $n$ is such that $t\in [s_n,s_{n+1})$. From now on, we will
say: \textit{the pseudo orbit} $(x_n,t_n)_{n\in \mathbb{Z}}$, as a
reference to all the points in $M$ obtained by the above
association. We will also write: \textit{the pseudo orbit}
$(x_n,t_n)_{n\in \mathbb{Z}}$ instead of \textit{the sequence}
$(x_n,t_n)_{n\in \mathbb{Z}}$.

If necessary, we can reduce the size of $\Gamma$ (see
\eqref{zz120}) in order to have
\begin{equation}\label{zz94}
    d(V,\Sigma^-)>3\Gamma \hspace{.4cm}\textmd{and}\hspace{.4cm} d(V,\Sigma^+)>3\Gamma.
\end{equation}
(see Figure \ref{zz122}). From the above inequalities and the
definition of the regions $U^+$ and $U^-$ we also get
\begin{equation}\label{zz91}
    d(\Sigma^+,U^-)>3\Gamma \hspace{.4cm}\textmd{and}\hspace{.4cm} d(\Sigma^-,U^+)>3\Gamma.
\end{equation}

\begin{figure}[h]
\begin{center}
\includegraphics[scale=0.5]{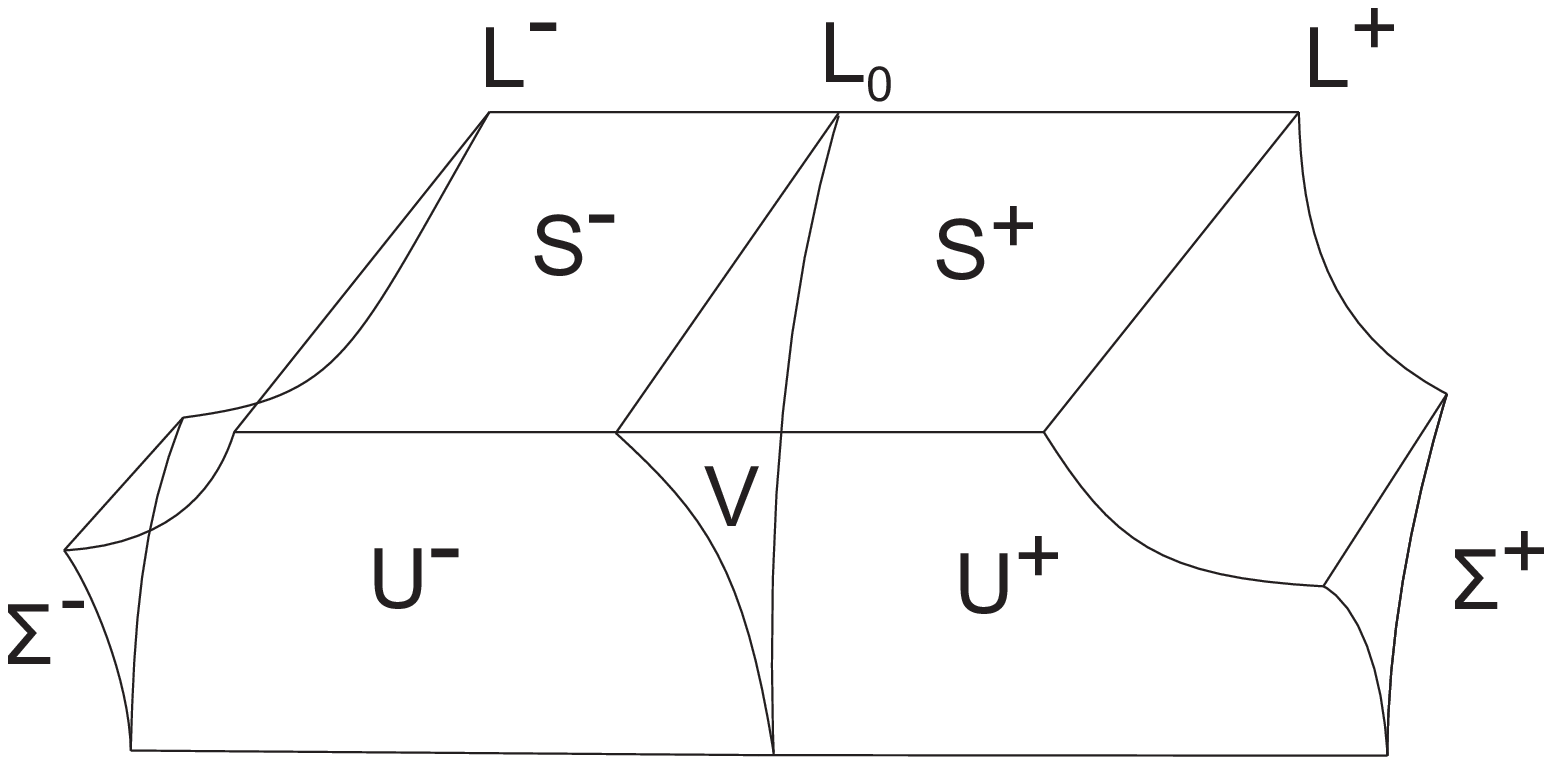}
\caption{\label{zz122} Regions}
\end{center}
\end{figure}

\begin{lemma}\label{zz102}
Let $(x_n,t_n)_{n\in \mathbb{Z}}$ be the pseudo orbit
\eqref{zz42}. Take a point $y$ in $M$, a reparametrization $h$ in
$Rep(\Delta)$. Then for each number $k\in \mathbb{Z}$ there exist
a number
\begin{equation*}
    u\in \{0, \dots ,4\}
\end{equation*}
and a time
\begin{equation*}
    t\in [s_{4k+u}, s_{4k+u+1}-\beta)
\end{equation*}
such that the following holds
\begin{equation}\label{zz86}
    d(\varphi(h(t),y),\varphi(t-s_{4k+u},x_{4k+u}))\geq2\Gamma.
\end{equation}
\end{lemma}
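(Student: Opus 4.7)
The plan is to argue by contradiction. Assume that for every $u \in \{0,\dots,4\}$ and every $t \in [s_{4k+u}, s_{4k+u+1}-\beta)$ one has $d(\varphi(h(t),y), \varphi(t-s_{4k+u}, x_{4k+u})) < 2\Gamma$. Apply Lemma~\ref{zz87} to the block indexed by $k$ (only the cases $u \in \{0,1,2,3\}$ of the hypothesis are needed): this upgrades the $2\Gamma$-closeness to $3\Gamma$-closeness on the whole interval $[s_{4k}, s_{4k+4})$. The remaining case $u=4$ already yields $d < 2\Gamma < 3\Gamma$ on $[s_{4k+4}, s_{4k+5}-\beta)$. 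Setting $\tilde y(t) := \varphi(h(t),y)$, the curve $\tilde y$ then stays within $3\Gamma$ of the pseudo orbit throughout $[s_{4k}, s_{4k+5}-\beta)$.

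Next I would localize $\tilde y$ near $\sigma$ at the two jump times $s_{4k+2}$ and $s_{4k+4}$. At $s_{4k+2}^-$ the pseudo orbit sits at $\varphi(t_{4k+1}, x_{4k+1})$, which, being on the forward flow of $\varphi(s^1_k, x_{4k+1}) \in L_0 \subset W^s(\sigma)$, belongs to $V$; at $s_{4k+2}^+$ it is $x_{4k+2} \in W^{u,+}(\sigma)$, both at distance $\Gamma/(\sqrt 2 \cdot 2^k)$ from $\sigma$. Continuity of $\tilde y$ forces $\tilde y(s_{4k+2})$ within $3\Gamma$ of $\sigma$. Since during segment~$1$ the pseudo orbit entered $V$ through the $S^+$-side (via the singular leaf $L_0$), the separations \eqref{zz94}--\eqref{zz91} leave $W^{u,+}$ into $U^+$ as the only emergence route into segment~$2$ compatible with the $3\Gamma$-closeness: an emergence on $W^{u,-}$ into $U^-$ would violate the $3\Gamma$-closeness to the segment-$2$ piece of the pseudo orbit, which lies in the $\Sigma^+$-channel. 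A symmetric analysis at $s_{4k+4}$ forces $\tilde y$ to emerge along $W^{u,-}$.

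The contradiction will then be extracted from the leaf dynamics on $S$. The leaves of $\tilde y$'s successive $S$-intersections satisfy $\mathcal F_{\tilde y_{j+1}} = f(\mathcal F_{\tilde y_j})$. At time $s_{4k+3}$, the $3\Gamma$-shadowing demands $\mathcal F_{\tilde y(s_{4k+3})}$ to lie within $3\Gamma$ of both $f(L^-) = \mathcal F_{\pi^2(x_{4k+2})}$ and $\mathcal F_{x_{4k+3}}$, where the latter is strictly bracketed $L^- < \mathcal F_{x_{4k+3}} < f(L^-)$ by \eqref{zz43}. Propagating this constraint forward by $P$ inside segment~$3$ and using the uniform expansion $f' \geq \lambda > \sqrt 2$ from hypothesis~(P3), the leaves of $\tilde y$ and of $\varphi(\cdot, x_{4k+3})$ diverge by at least a factor $\lambda$ per $P$-return. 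On the other hand, the forced emergence of $\tilde y$ on $W^{u,-}$ at $s_{4k+4}$ pins the leaf of $\tilde y$'s last $S$-intersection in segment~$3$ to a specific $S^-$-value whose orbit has closest approach $\Gamma/(\sqrt 2 \cdot 2^{k+1})$ to $\sigma$. These two leaf constraints become incompatible within a bounded number of iterates, controlled by $\Gamma$ and the positive gap $f(L^-) - L^- > 0$, producing the desired contradiction.

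I expect the main obstacle to lie in making the leaf-incompatibility in the third paragraph quantitatively rigorous: one must combine the strict bracketing in \eqref{zz43} with the expansion from~(P3) to propagate the small leaf error inherited at $s_{4k+3}$ into a super-$3\Gamma$ deviation well within segment~$3$, and then verify that this deviation is irreconcilable with the specific emergence-leaf value imposed by the $W^{u,-}$-emergence at $s_{4k+4}$.
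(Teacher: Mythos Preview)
Your contradiction setup and the use of Lemma~\ref{zz87} to upgrade to $3\Gamma$-closeness on $[s_{4k},s_{4k+4})$ match the paper, and the emergence-side analysis in your second paragraph is essentially the paper's Case~2 exclusion (if $\tilde y$ exited toward $\Sigma^-$ near $s_{4k+2}$ it would already violate \eqref{zz91} against segment~2). The real problem is your third paragraph.

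Your expansion argument needs a \emph{lower} bound on the initial leaf gap between $\tilde y$ and $x_{4k+3}$ at the start of segment~3, and you never produce one: the $3\Gamma$-closeness only bounds this gap from above, and the two leaves $f(L^-)$ and $\F_{x_{4k+3}}$ you mention are themselves only $\Gamma/2^{|k|}$ apart, so being close to both is no constraint. Without a positive lower bound, multiplying by $\lambda^q$ yields nothing, and $q$ (the number of returns of $x_{4k+3}$ to $L_0$) is not under your control. The ``pinning'' you invoke at $s_{4k+4}$ says only that $\tilde y$'s last $S$-hit in segment~3 is in $S^-$ and close to $L_0$; but the pseudo orbit's last hit is exactly $L_0$, so this is compatible, not contradictory. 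In short, the mechanism you describe does not close.

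The paper bypasses all of this with a purely qualitative \emph{order} argument, never touching (P3). It splits into two cases according to whether $\F_{\pi(y)}>\F_{x_{4k+1}}$ or $\F_{\pi(y)}\le\F_{x_{4k+1}}$. In Case~1, since $f|_{S^+}$ and $f|_{S^-}$ are increasing and the same-piece condition \eqref{zz92} is forced by \eqref{zz91}, the strict inequality propagates: when $\pi^m(x_{4k+1})\in L_0$ one has $\F_{\pi^{m+1}(y)}>L_0$; one more return gives $\F_{\pi^{m+2}(y)}\in(L^-,L_0)$, and another gives $\F_{\pi^{m+3}(y)}>f(L^-)>\F_{x_{4k+3}}$ by \eqref{zz43}. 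Iterating the same order-preservation through segment~3 yields $\F_{\pi^{q+m+3}(y)}>L_0$ when $\pi^q(x_{4k+3})\in L_0$, so in segment~4 the true orbit heads through $\Sigma^+$ while the pseudo orbit (from $x_{4(k+1)}\in W^{u,-}$) heads through $\Sigma^-$, and \eqref{zz91} gives a separation $>3\Gamma$ at some $t_0\in[s_{4k+4},s_{4k+5}-\beta)$. Case~2 is dispatched immediately: then $\F_{\pi^{m+1}(y)}\le L_0$, so the true orbit either stays in $V$ or exits via $\Sigma^-$, contradicting the $3\Gamma$-closeness to segment~2 (which lives in the $\Sigma^+$-channel) already on $[s_{4k+1},s_{4k+2})$. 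The point is that \eqref{zz43} is used not to seed an expansion estimate but to guarantee the strict inequality $\F_{x_{4k+3}}<f(L^-)<\F_{\pi^{m+3}(y)}$, and monotonicity alone carries it to the end.
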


\begin{proof}
Fix $k\in \mathbb{Z}$. The proof will follow by exclusion: suppose
that there are no number $u\in \{0, \dots ,3\}$ and no time $t\in
[s_{4k+u},s_{4k+u+1}-\beta)$ for which they verify the inequality
\eqref{zz86}. Then we are going to exhibit a time $t_0$ inside the
interval $[s_{4k+4},s_{4k+5}-\beta)$ satisfying
\begin{equation}\label{zz100}
     d(\varphi(h(t_0),y),\varphi(t_0-s_{4k+4},x_{4k+4}))>2\Gamma.
\end{equation}
Such an exhibition concludes the proof of this lemma.

Before we start, let us recall some definitions: if $z$ is a point
in $M$ then $\tau(z)$ is the time spent by the flow through $z$ to
reach the cross section $S$; $\pi(z)$ is such an intersection
point; $\pi^n(z)=\pi(\pi^{n-1}(z))$ and $\F_{\pi(z)}$ is the leaf
of the foliation $\F$ on $S$ containing the point $\pi(z)$.

Firstly observe that, from our exclusion hypothesis at the
beginning and Lemma \ref{zz87}, we have
\begin{equation}\label{zz90}
    d(\varphi(h(t),y),\varphi(t-s_{4k+u},x_{4k+u}))<3\Gamma
\end{equation}
for any $u\in \{0, \dots , 3\}$ and all $t\in
[s_{4k+u},s_{4k+u+1})$. In other words, for each time $t$ in the
interval $[s_{4k},s_{4k+4})$ the point associated by the pseudo
orbit $(x_n,t_n)_{n\in \mathbb{Z}}$ and the point associated by
the (reparameterized) orbit through $y$ are at a distance less
than $3\Gamma$. \textit{Therefore, as the time in such an interval
goes forward, the orbit and the pseudo orbit intersects the cross
section $S$ the same number of times}. In particular, the points
$\pi(y)$ and $x_{4k+1}$ are close.

From definition of the terms of type $x_{4k+1}$, there exists a
number $m\in \mathbb{N}$ such that the point $\pi^m(x_{4k+1})$
lies on the singular leaf $L_0$. Recall that the cross section $S$
is a (disjoint) union of the leaf $L_0$ with the ``pieces" $S^-$
and $S^+$.

Now we suppose that there exists a number $j\in \{0, \dots ,m-1\}$
for which the points $\pi^j(\pi(y))$ and $\pi^j(x_{4k+1})$ are in
distinct pieces, namely:
\begin{equation}\label{zz88}
    \pi^j(\pi(y))\in S^{+}\Rightarrow \pi^j(x_{4k+1})\in
    S^{-}
\end{equation}
or
\begin{equation}\label{zz89}
    \pi^j(\pi(y))\in S^{-}\Rightarrow \pi^j(x_{4k+1})\in
    S^{+}.
\end{equation}
Let us proceed our argument using the implication \eqref{zz88}
(the usage of \eqref{zz89} would be symmetrical). In this case,
there exist a time $t_0\in [s_{4k+1},s_{4k+2})$ such that
\begin{itemize}
    \item either
\begin{equation*}
    \varphi(h(t_0),y)\in \Sigma^+ \hspace{.4cm}\textmd{and}
    \hspace{.4cm} \varphi(t_0-s_{4k+1},x_{4k+1})\in U^-
\end{equation*}
    \item or
\begin{equation*}
    \varphi(t_0-s_{4k+1},x_{4k+1})\in \Sigma^- \hspace{.4cm}\textmd{and}
    \hspace{.4cm}\varphi(h(t_0),y)\in U^+.
\end{equation*}
\end{itemize}
In the first case we have
\begin{equation*}
    d(\varphi(t_0),y),\varphi(t_0-s_{4k+1},x_{4k+1}))\geq
    d(\Sigma^+,U^-)\overset{\eqref{zz91}}{>}3\Gamma.
\end{equation*}
In the second case we have
\begin{equation*}
    d(\varphi(t_0),y),\varphi(t_0-s_{4k+1},x_{4k+1}))\geq
    d(U^+,\Sigma^-)\overset{\eqref{zz91}}{>}3\Gamma.
\end{equation*}
In both cases we fall in contradiction with \eqref{zz90}. So, this
contradiction means that the points $\pi^j(\pi(y))$ and
$\pi(^j(x_{4k+1})$ are always in the same piece, namely:
\begin{equation}\label{zz92}
    \pi^j(\pi(y))\in S^{\pm}\Rightarrow \pi^j(x_{4k+1})\in
    S^{\pm}
\end{equation}
for all $j\in \{0, \dots , m-1\}$.

 Now consider the leaves $\F_{\pi(y)}$ and $\F_{x_{4k+1}}$ of the
foliation $\F$ defined on $S$. Recall that we defined an order
relation ``$\leq$" on $\F$. We will finish the proof of this lemma
by analyzing such an order relation over these leaves,
$\F_{\pi(y)}$ and $\F_{x_{4k+1}}$.

\vspace{.5cm}
\noindent\textit{Case(1):\,$\,\F_{\pi(y)}>\F_{x_{4k+1}}$.}
\newline

Observe that the functions $\left. f \right|_{S^+}$ and $\left. f
\right|_{S^-}$ are increasing. This fact together with the
implication \eqref{zz92} and the hypothesis of this case imply
\begin{equation*}
    L_0=\F_{\pi^m(x_{4k+1})}<\F_{\pi^{m+1}(y)}
\end{equation*}
(see Figure \ref{zz123}).

\begin{figure}[h]
\begin{center}
\includegraphics[scale=0.6]{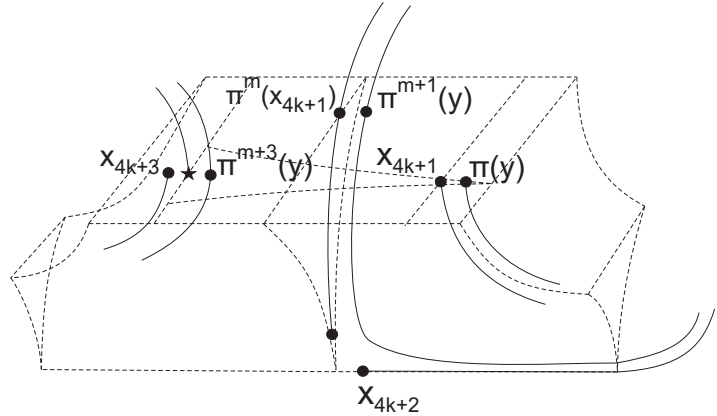}
\caption{\label{zz123} The orbit and the pseudo orbit still close
for times in $[s_{4k},s_{4k+4})$. The star point is
$\pi^2(x_{4k+2})$.}
\end{center}
\end{figure}

\noindent Thus
\begin{equation*}
    \F_{\pi(x_{4k+2})}=L^-<\F_{\pi^{m+2}(y)}<L_0.
\end{equation*}
Hence
\begin{equation}\label{zz97}
    \F_{x_{4k+3}}
    \overset{\eqref{zz43}}{<}\F_{\pi^2(x_{4k+2})}<\F_{\pi^{m+3}(y)}.
\end{equation}
From definition of the terms of type $x_{4k+3}$, there exists a
number $q\in \mathbb{N}$ such that the point $\pi^q(x_{4k+3})$
lies on the singular leaf $L_0$. From the same argument we used to
conclude the implication \eqref{zz92} (it is: the orbit and the
pseudo orbit must hit $S$ simultaneously in same pieces) we get
\begin{equation}\label{zz98}
    \F_{\pi^j(x_{4k+3})}\in S^{\pm} \Rightarrow \F_{\pi^{j+m+3}(y)}\in
    S^{\pm}
\end{equation}
for all $j\in \{0, \dots , q-1\}$. Since the functions $\left. f
\right|_{S^+}$ and $\left.f \right|_{S^-}$ are increasing then, by
\eqref{zz97} and \eqref{zz98}, we have
\begin{equation*}
    L_0=\F_{\pi^q(x_{4k+3})}<\F_{\pi^{q+m+3}(y)}
\end{equation*}
Therefore,
\begin{itemize}
    \item either
    \begin{equation}\label{zz99}
        \varphi(h(t_0),y)\in \Sigma^+ \hspace{.4cm}\textmd{and}\hspace{.4cm} \varphi(t_0-s_{4k+3},x_{4k+3})\in V
    \end{equation}
    for some $t_0\in[s_{4k+3},s_{4k+4})$
    \item or
    \begin{equation*}
        \varphi(h(t_0),y)\in \Sigma^+ \hspace{.4cm}\textmd{and}\hspace{.4cm} \varphi(t_0-s_{4k+3},x_{4k+3})\in
        U^-
    \end{equation*}
    for some $t_0\in[s_{4k+4},s_{4k+5})$ (see Figure \ref{zz124})
    \item or
    \begin{equation*}
        \varphi(t_0-s_{4k+3},x_{4k+3})\in \Sigma^- \hspace{.4cm}\textmd{and}\hspace{.4cm}
        \varphi(h(t_0),y)\in U^+
    \end{equation*}
    for some $t_0\in[s_{4k+4},s_{4k+5})$.
\end{itemize}

\begin{figure}[h]
\begin{center}
\includegraphics[scale=0.6]{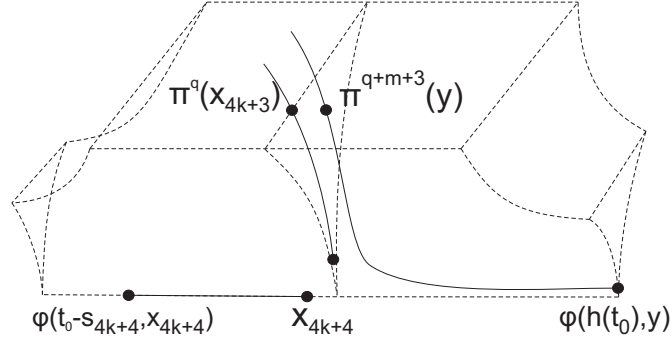}
\caption{\label{zz124} The orbit and the pseudo orbit gets far for
a time $t_0$ in $[s_{4k+4},s_{4k+5}-\beta)$.}
\end{center}
\end{figure}

The pertinence relations in \eqref{zz99} \textit{may not} occur
because, using \eqref{zz94}, we can contradict \eqref{zz90}. On
the other hand, the two remaining conditions means that
\begin{equation*}
    d(\varphi(t_0),y),\varphi(t_0-s_{4k+4},x_{4k+4}))\overset{\eqref{zz91}}{>}
    3\Gamma>2\Gamma
\end{equation*}
for some $t_0\in[s_{4k+4},s_{4k+5})$. This is almost what we
wanted to show (see \eqref{zz100}).

Recall the definition of $\beta$ on \eqref{zz85}. We conclude the
current case \textit{claiming that}
\begin{equation*}
    t_0\in[s_{4k+4},s_{4k+5}-\beta).
\end{equation*}
In fact, the pseudo orbit through $x_{4k+4}$ must cross
$\Sigma^-$. This implies that
\begin{equation*}
    \exists \, \eta\geq 0 ;
    \hspace{.3cm}\varphi((t_0+\eta)-s_{4k+4},x_{4k+4})\in \Sigma^-.
\end{equation*}
By the other side, we have
\begin{equation*}
    \varphi(s_{4k+5}-s_{4k+4},x_{4k+4})\in S^+.
\end{equation*}
Once $\overline{\Sigma^-}$ and $\overline{S^+}$ are disjoint then
the time
\begin{equation*}
    s_{4k+5}-(t_0+\eta)
\end{equation*}
is bounded away from zero (uniformly on $k$). Hence we can reduce
$\beta$, if necessary, to get
\begin{equation*}
    \beta< s_{4k+5}-(t_0+\eta).
\end{equation*}
So
\begin{equation*}
    t_0<s_{4k+5}-\beta.
\end{equation*}
As we wanted to show.

\vspace{.5cm}
\noindent\textit{Case(2):\,$\,\F_{\pi(y)}\leq\F_{x_{4k+1}}$.}
\newline

In this case we have
\begin{equation*}
    \F_{\pi^m(\pi(y))}\leq \F_{\pi^m(x_{4k+1})}=L_0.
\end{equation*}

Therefore the orbit either gets into $V$ and stays there or gets
into $U^-$ and ``escapes" through $\Sigma^-$. In turn, the pseudo
orbit gets into $V$ and escapes through $\Sigma^+$. Anyway, we can
use the same argument as in the \textit{Case(1)} to contradicts
the inequality \eqref{zz90}. This means that this current case
\textit{may not} occur. The proof of the lemma is over.

\end{proof}

\subsection{Conclusion}\label{zz115}

In this section we are going to prove the Theorem \ref{zz9}.

\vspace{.7cm} \noindent \textit{Proof of Theorem \ref{zz9}}.

Firstly we will prove that Lorenz flows $(\Lambda, \varphi)$ has
not the $\Delta$-asymptotic average shadowing property
($\Delta$-AASP).

Our argument will be by contradiction: suppose that $(\Lambda,
\varphi)$ has the $\Delta$-AASP. According to Definition
\ref{zz48}, for any asymptotic average-pseudo orbit
$(z_n,t_n)_{n\in \mathbb{Z}}$ there exist a point $y\in M$ and a
reparametrization $h\in Rep(\Delta)$ such that
\begin{equation}\label{zz107}
    \lim_{n\to\infty}\frac{1}{n}\sum_{i=0}^n\int_{s_i}^{s_{i+1}}d(\varphi({h(t)},y),
    \varphi({t-s_i},z_i))dt=0.
\end{equation}
Our target is to contradict this last sentence.

Let $(x_n,t_n)_{n\in \mathbb{Z}}$ be the asymptotic average-pseudo
orbit given by the Proposition \ref{zz101}. Take a point $y\in M$
and a reparametrization $h\in Rep(\Delta)$. The procedure is: we
will find a \textit{subsequence of} $(x_n,t_n)_{n\in \mathbb{Z}}$
such that the limit \eqref{zz107} will be \textit{strictly
positive}. This shall be enough to conclude the proof due to the
arbitrariness of the point $y$ and the reparametrization $h$.

By Lemma \ref{zz102}, there exist sequences $(u_k)_{k\in
\mathbb{Z}}$ and $(t_k)_{k\in \mathbb{Z}}$ verifying
\begin{equation*}\label{zz108}
    u_k\in \{0, \dots ,4\}
\end{equation*}
and
\begin{equation}\label{zz105}
    t_k\in [s_{4k+u_k},s_{4k+u_k+1}-\beta)
\end{equation}
for all $k\in \mathbb{Z}$ and such that the following holds:
\begin{equation}\label{zz104}
    d(\varphi(h(t_k),y),\varphi(t_k-s_{4k+u_k},x_{4k+u_k}))\geq 2\Gamma
\end{equation}
for all $k\in \mathbb{Z}$.

Now \textit{we claim that}
\begin{equation}\label{zz106}
    d(\varphi(h(t_k+r),y),\varphi((t_k+r)-s_{4k+u_k},x_{4k+u_k}))>\Gamma
\end{equation}
for all $r\in[-\beta,\beta]$.

In fact, one can verify this claim by using the equation
\eqref{zz104}, the definition of $\beta$ (Definition \ref{zz103})
and a same argument used in the proof of Lemma \ref{zz87}. In
other words, this claim says that: the points $\varphi(h(t_k),y)$
and $\varphi((t_k)-s_{4k+u_k},x_{4k+u_k})$, once $\Gamma$-far,
spend a time larger that $\beta$ to be $\Gamma$-close again (if
they do).

Therefore, consider the following subsequence of $(x_n,t_n)_{n\in
\mathbb{Z}}$:
\begin{equation*}
    (x_{4k+u_k},t_{4k+u_k})_{k\in \mathbb{Z}}.
\end{equation*}
Then
\begin{eqnarray*}
  \lim_{k\to \infty}\frac{1}{4k+u_k}\sum_{i=0}^{4k+u_k}\int_{s_i}^{s_{i+1}}
     d(\varphi(h(t),y),\varphi(t-s_i,x_i)dt &\geq& \, \\
  \lim_{k\to \infty}\frac{1}{5k}\sum_{i=1}^{k}\int_{s_{4i+u_i}}^{s_{4i+u_i+1}}
  d(\varphi(h(t),y),\varphi(t-s_{4i+u_i},x_{4i+u_i})dt &\overset{\eqref{zz105}}{\geq}& \, \\
  \lim_{k\to \infty}\frac{1}{5k}\sum_{i=1}^{k}\int_{t_i}^{t_i +\beta}
  d(\varphi(h(t),y),\varphi(t-s_{4i+u_i},x_{4i+u_i})dt &\overset{\eqref{zz106}}{\geq}& \, \\
  \lim_{k\to \infty}\frac{1}{5k}\sum_{i=1}^{k}\beta\Gamma &=&
  \frac{\beta\Gamma}{5}.
\end{eqnarray*}
As we wanted to show.

In this second part we are going to argue about Lorenz flows
having not the $\Delta$-average shadowing property ($\Delta$-ASP).
The proof is done also in the indirect method: suppose that
$(\Lambda , \varphi)$ has the $\Delta$-ASP. Then consider the same
pseudo orbit as before: $(x_n,t_n)_{n\in \mathbb{Z}}$. By
Proposition \ref{zz101}, it is an $\delta$-average-pseudo orbit
for every $\delta >0$. On the other hand
\begin{equation*}
    \lim_{n\to\infty}\frac{1}{n}\sum_{i=0}^n\int_{s_i}^{s_{i+1}}d(\varphi({h(t)},y),
    \varphi({t-s_i},x_i))dt> \frac{\beta \Gamma}{5}
\end{equation*}
for any $y\in M$ and any $h\in Rep(\Delta)$. So the Definition
\ref{zz46} fails for every $\epsilon < \frac{\beta \Gamma}{5}$
(the \textit{lim \hspace{-.08cm}sup} in \eqref{zz44} would be
larger than $\epsilon$). This finishes the second part.

Finally, one can show that Lorenz flows $(\Lambda , \varphi)$ has
not the $\Delta$-LSP.

\hfill $\Box$

\end{document}